\begin{document}

 \bibliographystyle{plain}
 \newtheorem{theorem}{Theorem}
 \newtheorem{lemma}[theorem]{Lemma}
 \newtheorem{corollary}[theorem]{Corollary}
 \newtheorem{problem}[theorem]{Problem}
 \newtheorem{conjecture}[theorem]{Conjecture}
 \newtheorem{definition}[theorem]{Definition}
 \newtheorem{prop}[theorem]{Proposition}
 \numberwithin{equation}{section}
 \numberwithin{theorem}{section}

 \newcommand{\mo}{~\mathrm{mod}~}
 \newcommand{\mc}{\mathcal}
 \newcommand{\rar}{\rightarrow}
 \newcommand{\Rar}{\Rightarrow}
 \newcommand{\lar}{\leftarrow}
 \newcommand{\lrar}{\leftrightarrow}
 \newcommand{\Lrar}{\Leftrightarrow}
 \newcommand{\zpz}{\mathbb{Z}/p\mathbb{Z}}
 \newcommand{\mbb}{\mathbb}
 \newcommand{\B}{\mc{B}}
 \newcommand{\cc}{\mc{C}}
 \newcommand{\D}{\mc{D}}
 \newcommand{\E}{\mc{E}}
 \newcommand{\F}{\mathbb{F}}
 \newcommand{\G}{\mc{G}}
  \newcommand{\ZG}{\Z (G)}
 \newcommand{\FN}{\F_n}
 \newcommand{\I}{\mc{I}}
 \newcommand{\J}{\mc{J}}
 \newcommand{\M}{\mc{M}}
 \newcommand{\nn}{\mc{N}}
 \newcommand{\cQ}{\mc{Q}}
 \newcommand{\cP}{\mc{P}}
 \newcommand{\U}{\mc{U}}
 \newcommand{\X}{\mc{X}}
 \newcommand{\Y}{\mc{Y}}
 \newcommand{\itQ}{\mc{Q}}
 \newcommand{\sgn}{\mathrm{sgn}}
 \newcommand{\C}{\mathbb{C}}
 \newcommand{\R}{\mathbb{R}}
 \newcommand{\T}{\mathbb{T}}
 \newcommand{\N}{\mathbb{N}}
 \newcommand{\Q}{\mathbb{Q}}
 \newcommand{\Z}{\mathbb{Z}}
 \newcommand{\A}{\mathbb{A}}
 \newcommand{\ff}{\mathfrak F}
 \newcommand{\fb}{f_{\beta}}
 \newcommand{\fg}{f_{\gamma}}
 \newcommand{\gb}{g_{\beta}}
 \newcommand{\vphi}{\varphi}
 \newcommand{\whXq}{\widehat{X}_q(0)}
 \newcommand{\Xnn}{g_{n,N}}
 \newcommand{\lf}{\left\lfloor}
 \newcommand{\rf}{\right\rfloor}
 \newcommand{\lQx}{L_Q(x)}
 \newcommand{\lQQ}{\frac{\lQx}{Q}}
 \newcommand{\rQx}{R_Q(x)}
 \newcommand{\rQQ}{\frac{\rQx}{Q}}
 \newcommand{\elQ}{\ell_Q(\alpha )}
 \newcommand{\oa}{\overline{a}}
 \newcommand{\oI}{\overline{I}}
 \newcommand{\dx}{\text{\rm d}x}
 \newcommand{\dy}{\text{\rm d}y}
\newcommand{\cal}[1]{\mathcal{#1}}
\newcommand{\cH}{{\cal H}}
\newcommand{\diam}{\operatorname{diam}}
\newcommand{\bx}{\mathbf{x}}
\newcommand{\Ps}{\varphi}

\newcommand{\zp}{\mathbb{Z}_{\textit{p}}}
\newcommand{\qp}{\mathbb{Q}_{\textit{p}}}
\newcommand{\zq}{\mathbb{Z}_{\textit{q}}}
\newcommand{\bA}{\mathbb{A}}
\newcommand{\bN}{\mathbb{N}}
\newcommand{\bQ}{\mathbb{Q}}
\newcommand{\bR}{\mathbb{R}}
\newcommand{\bZ}{\mathbb{Z}}

\parskip=0.5ex

\title[Bounded remainder sets for adelic tori]{Bounded remainder sets for rotations\\ on higher dimensional adelic tori}
\author{Akshat~Das, Joanna~Furno, Alan~Haynes}

\keywords{Bounded remainder sets, Birkhoff sums, adeles, rotations on compact groups.\\\phantom{A..}MSC 2020: 11J61, 11K38, 37A45}

\allowdisplaybreaks

\begin{abstract}
In this paper we give a simple, explicit construction of polytopal bounded remainder sets of all possible volumes, for any irrational rotation on the $d$ dimensional adelic torus $\A^d/\Q^d$. Our construction involves ideas from dynamical systems and harmonic analysis on the adeles, as well as a geometric argument that reduces the existence argument to the case of an irrational rotation on the torus $\R^d/\Q^d$.
\end{abstract}

\maketitle

\section{Introduction}\label{sec:intro}

Let $G$ be a compact, metrizable, Abelian group, written additively. There is a unique Haar probability measure on $G$ and, for each $\alpha\in G$, the measure-preserving map $T_\alpha: G\rar\ G$ defined by $T_\alpha(x)=x+\alpha$ is referred to as rotation by $\alpha$ on $G$. Bounded remainder sets (BRS's) for $T_\alpha$ are measurable sets $A$ with the property that there exists a constant $C=C(A)$ such that, for almost every $x\in G$ (with respect to Haar measure) and for any $N\in\N$,
\begin{equation*}
\left|\sum_{n=0}^{N-1}\chi_A(x+n\alpha)-N|A|\right|\le C,
\end{equation*}
where $|A|$ denotes the Haar measure of $A$. BRS's are allowed to be multisets, equivalently, $\chi_A$ is allowed to be a finite sum of indicator functions of measurable sets.

The study of BRS's for rotations on the groups $\T^d=\R^d/\Z^d, d\in\N,$ has a history of nearly 100 years. For $d=1$ the first results were obtained by Hecke \cite{Heck1922}, Ostrowski \cite{Ostr1927/30}, and Kesten \cite{Kest1966/67}, who proved that, for an irrational rotation by $\alpha$ on the circle $\R/\Z$, an interval $A$ will be a BRS if and only if
\begin{equation*}
|A|\in\alpha\Z+\Z.
\end{equation*}
In the $d=2$ case, Sz\"{u}sz \cite{Szus1954} constructed infinite families of BRS parallelograms. The $d\ge 2$ case was subsequently studied by Liardet \cite{Liar1987}, Rauzy \cite{Rauz1972}, Ferenczi \cite{Fere1992}, Oren \cite{Oren1982}, and Zhuravlev \cite{Zhur2005,Zhur2011,Zhur2012}, as well as other authors (e.g. \cite{HaynKoiv2016}). It follows from work of Furstenberg, Keynes, and Shapiro \cite{FursKeynShap1973}, and Hal\'asz \cite{Hala1976} that 
for any $d$ and for any $\alpha$ for which $T_\alpha$ is ergodic, the set of all volumes of BRS's for $\alpha$ is
\begin{equation*}
\{n\cdot\alpha +m\ge 0:n\in\Z^d,m\in\Z\}.
\end{equation*}
Recent work of Grepstad and Lev \cite{GrepLev2015} provides examples of \textit{parallelotope} BRS's of all possible volumes. After Grepstad and Lev's results, it was observed that the construction of BRS's for toral rotations is closely related to the construction of mathematical quasicrystals with rigid deformation properties (see \cite{DuneOgue90}). This led to another simple geometric explanation for why the parallelotopes in Grepstad and Lev's work are in fact BRS's \cite{HaynKellKoiv2017}.

Subsequent to the results described in the previous paragraph, in \cite{FurnHaynKoiv2019} attention was turned to the problem of constructing BRS's for rotations on compact subgroups of the adelic torus $\A/\Q$. The results of Hal\'asz mentioned in the previous paragraph also apply in this more general setting to give a description of the set of all volumes of BRS's. Therefore the emphasis is on actually constructing such sets in a geometrically appealing way. The proofs given in \cite{FurnHaynKoiv2019} involved new constructions using $p$-adic cut and project sets, and they are significant in that they extend previous results on the real torus to uncountably many topological group isomorphism classes of connected, compact, metrizable, Abelian groups. The goal of this paper is to complete this direction of inquiry by giving a simple geometric construction of BRS's of all possible volumes, for all ergodic rotations on adelic tori $\A^d/\Q^d$ in dimensions $d\ge 1$.

In order to state our main result, let $\A$ denote the ring of adeles over $\Q$ with the usual restricted product topology, and let $\mc{P}$ denote the collection of all prime numbers (more detailed definitions are given in the next section). Suppose $\mc{Q}$ is a (finite or infinite) subset of $\mc{P}$, write $\mc{Q}=\{p_1,p_2,\ldots\}$, and let $\A_\mc{Q}$ be the projection of $\A$ onto the coordinates indexed by the infinite place and the elements of $\mc{Q}$. We take $\A_\mc{Q}$ with the natural restricted product topology, which is also the final topology with respect to this projection. The additive group $\Gamma_\mc{Q}=\Z[1/p_1,1/p_2,\ldots ]$ can be embedded diagonally in $\A_\mc{Q}$ via the map $\gamma\mapsto (\gamma,\gamma,\ldots )$, and we identify $\Gamma_\mc{Q}$ with its image under this embedding, which is a discrete subgroup of $\A_\mc{Q}$. The quotient group
\begin{equation*}
X_\mc{Q}=\A_\mc{Q}/\Gamma_\mc{Q},
\end{equation*}
is easily seen to be a connected, compact, metrizable, Abelian group.

For $d\in\N$, we are interested in bounded remainder sets for rotations on $X_{\cQ}^d \cong \bA_{\cQ}^d / \Gamma_{\cQ}^d$ (taken with the product topology). Since it is convenient to examine all the coordinates at each place at once, we consider $\bA_{\cQ}^d$ as a subset of $\bR^d \times \prod_{p \in \cQ}\qp^d$. Thus, we consider elements of the form $\vec{\alpha}= ( \vec{\alpha}_{\infty}, \vec{\alpha}_{p_1}, \vec{\alpha}_{p_2} \ldots)$, where
\begin{align*}
\vec{\alpha}_{\infty} &= (\alpha_{\infty, 1}, \alpha_{\infty,2},\ldots,\alpha_{\infty,d}),~\text{and}\\
\vec{\alpha}_p &= ( \alpha_{p,1}, \alpha_{p,2},\ldots,\alpha_{p,d})~\text{for}~p \in \cQ.
\end{align*}
Rotation by $\vec{\alpha}$ on $X_\mc{Q}^d$ is the map $T_{\vec{\alpha}}: X_{\cQ}^d \rightarrow X_{\cQ}^d$ defined by $T_{\vec{\alpha}}(\vec{x}) = \vec{x} + \vec{\alpha}$. Our main theorem provides a construction of adelic polytope BRS's for $T_{\vec{\alpha}}$, of all possible volumes, in the generic case when this map is ergodic.
\begin{theorem}\label{thm:main}
Suppose $\cQ \subseteq \cP$ and that $X_{\cQ}^d$ is defined as above. Suppose that $\vec{\alpha} \in X_{\cQ}^d$ and that $1, \alpha_{\infty, 1},\ldots,\alpha_{\infty,d}$ are linearly independent over $\bQ$. Then the set of all volumes of BRS's for $T_{\vec{\alpha}}$ is 
\begin{equation}\label{eq:volumes}
	\left\{ \sum_{j=1}^d \left(\gamma_j\alpha_{\infty,j} - \sum_{p \in \cQ}\left\{\gamma_j\alpha_{p,j}\right\}_{p}\right) + \eta \geq 0 : \gamma_j \in \Gamma_{\cQ}, \eta \in \bZ \right\},
\end{equation}
where $\left\{ \cdot\right\}_p: \bQ_p \rightarrow \bR$ is the $p$-adic fractional part. Furthermore, for every volume in this set, there is a BRS for $T_{\vec{\alpha}}$ of that volume which is the projection to $X_{\cQ}^d$ of the Cartesian product of a parallelotope in $\R^d$ with balls centered at $0$ in the $p$-adic directions (all but finitely many of which have radius 1).
\end{theorem}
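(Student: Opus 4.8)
\textit{Proof sketch.} The plan is to prove the two inclusions of the asserted identity separately, obtaining the harder one from an explicit construction that is reduced, torus by torus, to the known results on real tori.

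For the inclusion of the set of BRS volumes into \eqref{eq:volumes}, I would invoke the theorems of Furstenberg--Keynes--Shapiro \cite{FursKeynShap1973} and Hal\'asz \cite{Hala1976}, which apply to an ergodic rotation on any compact metrizable Abelian group and say that the volume of a BRS must be a non-negative real $t$ with $e^{2\pi i t}$ in the group of eigenvalues of $T_{\vec\alpha}$; so the point is to compute that group. With the standard characters normalized by $\psi_\infty(x)=e^{2\pi i x}$ and $\psi_p(x)=e^{-2\pi i\{x\}_p}$, the product $\psi=\psi_\infty\prod_{p\in\cQ}\psi_p$ is a character of $\bA_{\cQ}$ that is trivial on $\Gamma_{\cQ}$ (by the product formula, since $\Gamma_{\cQ}$ has denominators supported on $\cQ$), and a standard computation identifies $\widehat{X_{\cQ}^d}$ with $\Gamma_{\cQ}^d$, the tuple $\vec n=(\gamma_1,\dots,\gamma_d)$ giving the character $\vec x\mapsto\prod_j\psi(\gamma_j x_j)$. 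Evaluating at $\vec\alpha$ produces exactly $\exp\!\big(2\pi i\sum_j(\gamma_j\alpha_{\infty,j}-\sum_{p\in\cQ}\{\gamma_j\alpha_{p,j}\}_p)\big)$, so the set of such $t$ is precisely \eqref{eq:volumes}; and for $\vec n\ne\vec 0$ this character equals $1$ only if $1,\alpha_{\infty,1},\dots,\alpha_{\infty,d}$ are $\bQ$-linearly dependent (clear denominators to reduce to $\vec n\in\bZ^d$), so under our hypothesis $T_{\vec\alpha}$ is ergodic and the inclusion follows.

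For the reverse inclusion and the explicit form, I would realize each admissible volume by pulling back a Grepstad--Lev parallelotope from a real-torus factor of $(X_{\cQ}^d,T_{\vec\alpha})$. Fix a target $v_0=\sum_j(\gamma_j\alpha_{\infty,j}-\sum_p\{\gamma_j\alpha_{p,j}\}_p)+\eta\ge 0$ with $\vec n=(\gamma_1,\dots,\gamma_d)\in\Gamma_{\cQ}^d$, $\eta\in\bZ$. Choose a finite $S\subseteq\cQ$ containing every prime in a denominator of some $\gamma_j$ and every $p$ with $\{\gamma_j\alpha_{p,j}\}_p\ne 0$ for some $j$, pick integers $m_p\ge 0$ (with $m_p=0$ for $p\notin S$) large enough that $p^{m_p}\gamma_j\in\zp$ for all $j$, and set $M=\prod_p p^{m_p}$. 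The compact subgroup $H=\{0\}\times\prod_{p\in\cQ}(p^{m_p}\zp)^d$ of $\bA_{\cQ}^d$ satisfies $H\cap\Gamma_{\cQ}^d=\{0\}$, and a short calculation with the fundamental domain $[0,1)^d\times\prod_p\zp^d$ identifies $X_{\cQ}^d/\overline H$ (with $\overline H$ the image of $H$), via the real coordinates, with the torus $\bR^d/M\bZ^d$, carrying $T_{\vec\alpha}$ to the rotation by $\vec\beta=(\vec\alpha_\infty-\vec r)\bmod M\bZ^d$, where $\vec r\in\Gamma_{\cQ}^d$ is any element with $\vec\alpha_p-\vec r\in(p^{m_p}\zp)^d$ for all $p$ (such $\vec r$ exists by the Chinese Remainder Theorem, $\Gamma_{\cQ}$ being dense in $\prod_{p\in\cQ}\qp$). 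This descended rotation is again ergodic, so by Grepstad--Lev \cite{GrepLev2015} (applied to $\bR^d/M\bZ^d$, i.e.\ to $\T^d$ after rescaling) there is a parallelotope $\widehat P\subseteq\bR^d$ whose image $\overline P$ in $\bR^d/M\bZ^d$ is a BRS of volume exactly $v_0$; the choices of $S$ and the $m_p$ are made precisely so that $v_0$ lies in the factor's volume set $\{\sum_j\gamma'_j\beta_j+m\ge 0:\vec\gamma'\in\tfrac1M\bZ^d,\ m\in\bZ\}$, with $\vec\gamma'=\vec n$. (Conversely, every element of that set lies in \eqref{eq:volumes} --- a bookkeeping identity resting on $\gamma'_j r_j\equiv\sum_p\{\gamma'_j\alpha_{p,j}\}_p\bmod\bZ$ whenever $p^{m_p}\gamma'_j\in\zp$ for all $p$, which is exactly what forces $\{\gamma_j\alpha_{p,j}\}_p$, rather than $\gamma_j\{\alpha_{p,j}\}_p$, to appear in \eqref{eq:volumes} --- so varying $M$ over $\cQ$-integers recovers all of \eqref{eq:volumes}. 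This passage through the honest tori $\bR^d/M\bZ^d$ is the precise content of the reduction to $\bR^d/\bQ^d$, the finite quotients replacing the ill-defined rotation on $\bR^d/\Gamma_{\cQ}^d$.)

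To conclude, preimages of BRS's under measure-preserving factor maps are again BRS's of the same volume, so the pullback of $\overline P$ is a BRS for $T_{\vec\alpha}$ of volume $v_0$; unwinding the identification of the factor, this pullback is exactly the projection to $X_{\cQ}^d$ of $\widehat P\times\prod_{p\in\cQ}(p^{m_p}\zp)^d$, since the fibre of the factor map over a point is a translate of $(p^{m_p}\zp)^d$, each such translate is carried onto $(p^{m_p}\zp)^d$ by an element of $\Gamma_{\cQ}^d$ that simultaneously shifts $\widehat P$ back into position, and the multiplicities agree by a direct count. As $(p^{m_p}\zp)^d$ is the ball of radius $p^{-m_p}$ about $0$ in $\qp^d$, equal to $\zp^d$ for all but the finitely many $p\in S$, this set has the asserted form, and letting $v_0$ range over \eqref{eq:volumes} completes the proof. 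I expect the main obstacle to be this middle step --- verifying that collapsing the $p$-adic balls really produces an honest real-torus factor carrying the stated rotation, and that the preimage of a Grepstad--Lev parallelotope is a single Cartesian product of balls with the right multiplicity function and volume --- since this is where the adelic geometry has to be meshed with the denominator arithmetic behind \eqref{eq:volumes}; the case $d=1$ was treated, by somewhat different means, in \cite{FurnHaynKoiv2019}.
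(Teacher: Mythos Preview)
Your proposal is correct and follows the same overall strategy as the paper: restrict the possible volumes via the eigenvalue/coboundary argument, then realize each admissible volume by reducing to a Grepstad--Lev parallelotope on a real torus. The execution differs in packaging, though.

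For the volume restriction, the paper does not cite Hal\'asz directly but instead reproves the coboundary criterion (their Lemma~\ref{lem:CoBound}) and the Fourier/eigenvalue computation (Proposition~\ref{prop:volumes}) in the adelic setting---substantively the same as what you sketch, just self-contained.

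For the construction, the paper works coordinate-by-coordinate rather than through a group quotient. Given $\gamma_j=g_j/\delta_j$ with $g_j\in\Z$, they set $D=\mathrm{diag}(\delta_1,\dots,\delta_d)$, define
\[
\vec\beta=\Bigl(\tfrac{\alpha_{\infty,1}}{\delta_1}-\textstyle\sum_p\{\alpha_{p,1}/\delta_1\}_p,\ \dots,\ \tfrac{\alpha_{\infty,d}}{\delta_d}-\textstyle\sum_p\{\alpha_{p,d}/\delta_d\}_p\Bigr)\in\R^d,
\]
take a Grepstad--Lev parallelotope $P_B$ for $S_{\vec\beta}$ on $\R^d/\Z^d$, and put
\[
A=D\!\cdot\! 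P_B\ \times\ \prod_{p\in\cQ}(\delta_1\Z_p\times\cdots\times\delta_d\Z_p).
\]
They then verify directly, by counting lattice translates, that $\chi_A(T_{\vec\alpha}^n(0))=\chi_{P_B}(S_{\vec\beta}^n(0))$ for all $n$ (their Lemmas~\ref{lemma:rescale}--\ref{lemma:reducetocircle}). Your route---quotienting by $H=\{0\}\times\prod_p(p^{m_p}\Z_p)^d$ to obtain a genuine factor $X_\cQ^d\to\R^d/M\Z^d$ and then pulling back---is a more structural phrasing of the same reduction; the ``middle step'' you flag as the main obstacle is exactly what the paper's three lemmas unpack by hand. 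What your approach buys is a cleaner conceptual reason (preimages of BRS's under measure-preserving factor maps are BRS's of the same volume); what the paper's buys is that no abstract identification of the quotient is needed, and the verification that multiplicities match is explicit. A cosmetic difference: your $p$-adic component $(p^{m_p}\Z_p)^d$ is an honest ball in $\Q_p^d$, uniform across coordinates, while the paper allows the radii $|\delta_j|_p$ to vary with $j$, giving a box rather than a ball at each $p$.
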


The $p$-adic fractional part is defined in the next section. The condition that the numbers $1, \alpha_{\infty, 1},\ldots,\alpha_{\infty,d}$ are linearly independent over $\bQ$ is necessary and sufficient to ensure that the map $T_{\vec{\alpha}}$ is ergodic. Furthermore, ergodicity in this setting is equivalent to unique ergodicity, and also to the requirement that every orbit of $T_{\vec{\alpha}}$ is dense in $X_{\cQ}^d$. See Lemma \ref{lem:UErg} below for references and proofs of these statements.

The layout of this paper is as follows: In Section \ref{sec:background} we present background material and notation. In Section \ref{sec:restriction} we show that, for $\vec{\alpha}$ satisfying the hypothesis of our main theorem, the volume of any BRS for $T_{\vec{\alpha}}$ must belong to the set \eqref{eq:volumes}. In Section \ref{sec:construction} we construct BRS's for each of these allowable volumes, by scaling the sets and translations in order to shift the focus to the Archimedean coordinates.

\section{Background}\label{sec:background}

For background material on $p$-adic analysis and Fourier analysis over local fields we refer the reader to \cite[Chapter 1]{Kobl1984} and \cite[Chapter 3]{RamaVale1999}.

Let $p$ be a prime number, let $\Q_p$ denote the field of $p$-adic numbers, and let $|\cdot|_p$ denote the usual $p$-adic absolute value on this field. The ring of $p$-adic integers $\Z_p$ is the set of $x \in \Q_p$ with $|x|_p \leq 1$. Every element $x \in \Q_p$ can be expressed as a sum of the form
\begin{equation*}
x = \sum_{i = N}^{\infty} x_i p^i,
\end{equation*}
where $x_i \in \left\{ 0, 1, \ldots, p-1 \right\}$ for all integers $i \geq N$. We define $p$-adic fractional part and the $p$-adic integer part of such an element by
\begin{equation*}
\left\{x \right\}_p = \sum_{i=N}^{-1} x_i p^i \quad\text{ and }\quad \left\lfloor x\right\rfloor_p = \sum_{i = 0}^{\infty} x_i p^i,
\end{equation*}
respectively, with the usual convention that the empty sum is 0. To parallel the $p$-adic notation, we will use $|\cdot|_{\infty}$, $\left\{\cdot \right\}_{\infty}$ , and $\left\lfloor\cdot \right\rfloor_{\infty}$ to denote the usual Archimedean absolute value, fractional part, and integer part on $\R$.

The field $\Q_p$ is locally compact and its additive group has Pontryagin dual $\widehat{\Q}_p \cong \Q_p$. To make this isomorphism explicit, let
$e(z) = e^{2\pi iz}$ and, for $y \in \Q_p$, define $\psi_y\in\widehat{\Q}_p$ by 
\begin{eqnarray*}
\psi_y : \Q_p &\rightarrow & \C\\
x & \mapsto & e(\left\{yx \right\}_p).
\end{eqnarray*}
The map $y \mapsto \psi_y$ is then a topological group isomorphism between $\Q_p$ and $\widehat{\Q}_p$.

Let $\cP$ be the set of all prime numbers, and let 
$\cQ = \left\{p_1, p_2, \ldots \right\}$ be a nonempty subset of $\cP$. As a set, $\A_\mc{Q}$ is defined to be the collection of elements
\begin{equation*}
\alpha=(\alpha_\infty,\alpha_{p_1},\alpha_{p_2},\ldots)\in\R\times\prod_{p\in\mc{Q}}\Q_p
\end{equation*}
which satisfy $\alpha_p\in\Z_p$ for all but finitely many primes $p\in\mc{Q}$. It follows from the strong triangle inequality for the non-Archimedean absolute values that the elements of $\A_\mc{Q}$ form a ring under pointwise addition and multiplication. A natural topology on $\A_\mc{Q}$ is the restricted product topology, with respect to the sets $\Z_p$, for $p\in\mc{Q}$. With this topology the additive group of $\A_\mc{Q}$ is a locally compact topological group. This group is also self dual, with an explicit isomorphism given by the map from $\A_\mc{Q}$ to $\widehat{\A}_\mc{Q}$ defined by $\beta\mapsto\psi_\beta^\mc{Q},$ where
\begin{equation*}
\psi_\beta^\mc{Q}(\alpha)=e(\beta_\infty\alpha_\infty)\cdot\prod_{p\in\mc{Q}}e(-\{\beta_p\alpha_p\}_p).
\end{equation*}
Note that in this product, all but finitely many terms are equal to $1$.

Now let $d$ be a positive integer, and consider  $\A_{\cQ}^d$ with the product topology, and with elements $\vec{\alpha}\in \A_{\cQ}^d$ denoted as in the introduction. The group $\Gamma_\mc{Q}=\Z[1/p_1,1/p_2,\ldots]$ embeds in $\A_\mc{Q}$ by the map $\gamma\mapsto (\gamma,\gamma,\gamma\ldots)$, and we identify it with its image under this map. With this identification, it is easy to check that $\Gamma_\mc{Q}$ is a discrete and closed subgroup of $\A_\mc{Q}$, and that a strict fundamental domain for the quotient group $X_\mc{Q}=\A_\mc{Q}/\Gamma_\mc{Q}$ is given by the collection of points
\begin{equation*}
[0,1)\times\prod_{p\in\mc{Q}}\Z_p\subseteq\A_\mc{Q}.
\end{equation*}
It follows that  $X_{\cQ}^d = \A_{\cQ}^d / \Gamma_{\cQ}^d$ is a compact group with strict fundamental domain
\[F_{\cQ}^d = [0,1)^d \times \prod_{p \in \cQ}\zp^d,\]
and that its dual group is the subset of characters on $\A_\mc{Q}^d$ which are trivial on $\Gamma_\mc{Q}^d$. Explicitly, the map from the discrete group $\Gamma_\mc{Q}^d$ to $\widehat{X}_{\cQ}^d$ given by  $\vec{\gamma} = (\gamma_1, \ldots, \gamma_d) \mapsto \psi_{\vec{\gamma}}^{\cQ}$, where
\begin{eqnarray*}
\psi_{\vec{\gamma}}^{\cQ} : X_{\cQ}^d &\rightarrow & \C\\
\vec{\alpha} & \mapsto & \prod_{j=1}^d \left(e(\gamma_j \alpha_{\infty,j})\prod_{p \in \cQ}e(-\left\{\gamma_j \alpha_{p,j} \right\}_{p})\right),
\end{eqnarray*}
is a topological group isomorphism.

If the translation $T_{\vec{\alpha}}$ is uniquely ergodic, then the convergence of the Birkhoff sums to their ergodic averages is independent of the starting point $\vec{x} \in X_{\cQ}^d$. This is important in what follows, as it implies that all translates of BRS's for 
$T_{\vec{\alpha}}$ are also BRS's for $T_{\vec{\alpha}}$. Recall the following general form of Weyl's criterion.
\begin{lemma}\cite[Chapter 4, Corollary 1.2]{KuipNied1974}\label{lem:Weyl}
  Suppose that $G$ is a compact Abelian group. A sequence $\{x_n\}_{n\in\N}\subseteq G$ is uniformly distributed in $G$ with respect to Haar measure if and only if, for every nontrivial character $\chi\in\widehat{G}$,
  \begin{equation*}
    \lim_{N\rar\infty}\frac{1}{N}\sum_{n=1}^N\chi(x_n)=0.
  \end{equation*}
\end{lemma}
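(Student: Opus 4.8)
The plan is to work from the standard definition that a sequence $\{x_n\}_{n\in\N}\subseteq G$ is uniformly distributed with respect to the Haar probability measure $\mu$ precisely when $\frac1N\sum_{n=1}^N f(x_n)\rar\int_G f\,d\mu$ for every continuous $f\colon G\rar\C$, and to prove the two implications of the biconditional separately. (If one instead takes the formulation in terms of $\mu$-continuity sets, I would begin by recalling the standard equivalence with this ``continuous test function'' formulation, valid since $\mu$ is a continuous Borel probability measure.) The forward direction is essentially immediate; the reverse direction is where the real content lies, and it hinges on a density statement for trigonometric polynomials.

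For the forward implication, suppose $\{x_n\}$ is uniformly distributed and let $\chi\in\widehat{G}$ be nontrivial. Since $\chi$ is continuous, applying the defining property with $f=\chi$ gives $\frac1N\sum_{n=1}^N\chi(x_n)\rar\int_G\chi\,d\mu$. But a nontrivial character integrates to zero against Haar measure: picking $g\in G$ with $\chi(g)\neq 1$ and using translation invariance of $\mu$, we get $\int_G\chi\,d\mu=\int_G\chi(x+g)\,d\mu(x)=\chi(g)\int_G\chi\,d\mu$, forcing $\int_G\chi\,d\mu=0$. Hence $\frac1N\sum_{n=1}^N\chi(x_n)\rar 0$.

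For the reverse implication, assume the Weyl sums vanish for every nontrivial $\chi$. The trivial character is the constant function $1$, for which $\frac1N\sum_{n=1}^N 1=1=\int_G 1\,d\mu$ trivially, so in fact $\frac1N\sum_{n=1}^N\chi(x_n)\rar\int_G\chi\,d\mu$ for \emph{every} $\chi\in\widehat{G}$. By linearity this averaging identity holds for every finite $\C$-linear combination of characters, i.e.\ for every trigonometric polynomial on $G$. Now the trigonometric polynomials form a subalgebra of $C(G)$ (a product of characters is a character), it is closed under complex conjugation (since $\overline{\chi}=\chi^{-1}\in\widehat{G}$), it contains the constants, and by Pontryagin duality $\widehat{G}$ separates the points of $G$; hence by the Stone--Weierstrass theorem this subalgebra is dense in $C(G)$ for the supremum norm. (Alternatively one may cite the Peter--Weyl theorem in the Abelian case.) Given $f\in C(G)$ and $\varepsilon>0$, choose a trigonometric polynomial $P$ with $\|f-P\|_\infty<\varepsilon$ and estimate $\bigl|\frac1N\sum_{n=1}^N f(x_n)-\int_G f\,d\mu\bigr|\le \|f-P\|_\infty+\bigl|\frac1N\sum_{n=1}^N P(x_n)-\int_G P\,d\mu\bigr|+\|P-f\|_\infty$; letting $N\rar\infty$ the middle term vanishes, so the left side is eventually $<3\varepsilon$. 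Since $\varepsilon$ was arbitrary, $\{x_n\}$ is uniformly distributed.

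The only genuinely nonformal ingredient is the density of trigonometric polynomials in $C(G)$; everything else is linearity and a routine $3\varepsilon$-argument. For a general compact Abelian group this density reduces to the fact that $\widehat{G}$ separates points (Pontryagin duality) combined with Stone--Weierstrass, so that is the step I would expect to be the main obstacle to a fully self-contained treatment. In the paper itself, since the statement is quoted from \cite{KuipNied1974}, it suffices to cite it.
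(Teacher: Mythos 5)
The paper does not supply a proof of this lemma; it is quoted directly from Kuipers and Niederreiter with a citation, so there is nothing internal to compare against. Your argument is correct and is the standard one found in that reference and in most ergodic-theory texts: the forward direction reduces to the vanishing of $\int_G\chi\,d\mu$ for nontrivial $\chi$ via translation invariance of Haar measure, and the reverse direction upgrades convergence on characters to convergence on all of $C(G)$ by linearity, density of trigonometric polynomials (Stone--Weierstrass together with the fact that $\widehat{G}$ separates points, which is where Pontryagin duality enters), and the usual $3\varepsilon$ estimate. One small remark: you correctly flag that the Stone--Weierstrass step is the only place where a genuinely nontrivial input is needed, and you correctly identify the separation-of-points hypothesis as the crux; this is exactly the point where compactness and the structure theory of $G$ are used, so your assessment of where the content lies is accurate.
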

We now demonstrate how Lemma \ref{lem:Weyl} can be used to easily classify the collection of all $\vec{\alpha}\in X_\mc{Q}^d$ for which $T_{\vec{\alpha}}$ is uniquely ergodic.

\begin{lemma}\label{lem:UErg}
	The translation $T_{\vec{\alpha}}$ is uniquely ergodic if and only if it is ergodic, and it is ergodic if and only if the real numbers
	\[1, \alpha_{\infty,1}, \alpha_{\infty,2}, \ldots, \alpha_{\infty,d}\] are linearly independent over $\Q$.
\end{lemma}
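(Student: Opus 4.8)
The plan is to route everything through the single auxiliary condition
\[(\star)\qquad \chi(\vec\alpha)\neq 1\ \text{ for every nontrivial character } \chi\in\widehat{X}_{\cQ}^d,\]
and to show that (a) ergodicity of $T_{\vec\alpha}$ is equivalent to $(\star)$, (b) unique ergodicity of $T_{\vec\alpha}$ is equivalent to $(\star)$, and (c) $(\star)$ is equivalent to the $\Q$-linear independence of $1,\alpha_{\infty,1},\ldots,\alpha_{\infty,d}$; chaining these equivalences gives both assertions of the lemma. For (a) I would run the standard $L^2$ eigenfunction argument: the characters of $X_{\cQ}^d$ form an orthonormal basis of $L^2(X_{\cQ}^d)$ and $\chi\circ T_{\vec\alpha}=\chi(\vec\alpha)\,\chi$, so $f=\sum_\chi c_\chi\chi$ is $T_{\vec\alpha}$-invariant exactly when $c_\chi(\chi(\vec\alpha)-1)=0$ for all $\chi$; under $(\star)$ this forces $f$ to be constant, so $T_{\vec\alpha}$ is ergodic, while if $(\star)$ fails the offending character is itself a non-constant invariant function.

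For (b), one direction is free, since unique ergodicity implies ergodicity and hence $(\star)$. For the converse I would use the standard fact that a translation on a compact group is uniquely ergodic (with Haar measure the unique invariant measure) as soon as every orbit $(\vec x+n\vec\alpha)_{n\in\N}$ is uniformly distributed with respect to Haar measure; by translation invariance of Haar measure it suffices to check this for $\vec x=0$, and then Lemma \ref{lem:Weyl} applied to $(n\vec\alpha)_n$, together with the multiplicativity $\chi(n\vec\alpha)=\chi(\vec\alpha)^n$, reduces the task to showing $\tfrac1N\sum_{n=1}^N\chi(\vec\alpha)^n\to0$ for each nontrivial $\chi$ --- which is immediate from $(\star)$ by summing the geometric series, since $\left|\sum_{n=1}^N\chi(\vec\alpha)^n\right|\le 2/|1-\chi(\vec\alpha)|$. (If one prefers to bypass the equidistribution fact, one can instead observe directly that any $T_{\vec\alpha}$-invariant Borel probability measure $\mu$ satisfies $\int\chi\,d\mu=\chi(\vec\alpha)\int\chi\,d\mu$, so $\int\chi\,d\mu=0$ for every nontrivial $\chi$ under $(\star)$, whence $\mu$ agrees with Haar measure on all characters and therefore equals it.)

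Step (c) is where the real work lies. I would start from the description of $\widehat{X}_{\cQ}^d$ in Section \ref{sec:background}: every nontrivial character equals $\psi_{\vec\gamma}^{\cQ}$ for a unique $\vec\gamma=(\gamma_1,\ldots,\gamma_d)\in\Gamma_{\cQ}^d\setminus\{\vec 0\}$, and combining exponentials,
\[\psi_{\vec\gamma}^{\cQ}(\vec\alpha)=e\!\left(\sum_{j=1}^d\Big(\gamma_j\alpha_{\infty,j}-\sum_{p\in\cQ}\{\gamma_j\alpha_{p,j}\}_p\Big)\right),\]
so $\psi_{\vec\gamma}^{\cQ}(\vec\alpha)=1$ if and only if $S(\vec\gamma):=\sum_{j=1}^d(\gamma_j\alpha_{\infty,j}-\sum_{p\in\cQ}\{\gamma_j\alpha_{p,j}\}_p)\in\Z$. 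Choosing the representative of $\vec\alpha$ in $F_{\cQ}^d$ so that $\alpha_{p,j}\in\Z_p$ for every $p$, and using that $\gamma_j\in\Z_p$ for all but finitely many $p$, each $\{\gamma_j\alpha_{p,j}\}_p$ is a rational number that vanishes for all but finitely many $p$; note the linear-independence hypothesis is insensitive to this choice of representative, since changing it shifts each $\alpha_{\infty,j}$ by an element of $\Gamma_{\cQ}\subseteq\Q$. Then: if $1,\alpha_{\infty,1},\ldots,\alpha_{\infty,d}$ are $\Q$-independent, $S(\vec\gamma)\in\Z$ for some $\vec\gamma\neq\vec0$ would force $\sum_j\gamma_j\alpha_{\infty,j}=S(\vec\gamma)+\sum_{j,p}\{\gamma_j\alpha_{p,j}\}_p$ to be rational, a nontrivial $\Q$-linear relation among $1,\alpha_{\infty,1},\ldots,\alpha_{\infty,d}$ (the $\gamma_j$ are not all zero) --- impossible, so $(\star)$ holds. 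Conversely, from a nontrivial relation $q_0+\sum_j q_j\alpha_{\infty,j}=0$ with $q_0,\ldots,q_d\in\Q$ not all zero --- in which necessarily some $q_j$ with $j\ge1$ is nonzero --- I clear all denominators by an integer $M$ to get $\vec\gamma=(Mq_1,\ldots,Mq_d)\in\Z^d\setminus\{\vec0\}\subseteq\Gamma_{\cQ}^d\setminus\{\vec0\}$; since $\gamma_j\in\Z\subseteq\Z_p$ every $p$-adic fractional part vanishes, so $S(\vec\gamma)=-Mq_0\in\Z$, hence $\psi_{\vec\gamma}^{\cQ}(\vec\alpha)=1$ and $(\star)$ fails.

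Steps (a) and (b) are routine standard material; the main obstacle --- really careful bookkeeping rather than any conceptual difficulty --- is step (c), where one must track the rationality and finite support of the $p$-adic fractional-part contributions to $S(\vec\gamma)$ and make sure the statement is phrased independently of the chosen representative of $\vec\alpha$. The one mildly non-elementary ingredient is the equivalence, for group translations, between unique ergodicity and equidistribution of every orbit, and even that can be sidestepped by the measure-theoretic alternative indicated in step (b).
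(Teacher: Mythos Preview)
Your proposal is correct and relies on the same core ingredients as the paper: the parametrization of $\widehat{X}_{\cQ}^d$ by $\Gamma_{\cQ}^d$, the formula $\psi_{\vec\gamma}^{\cQ}(\vec\alpha)=e(S(\vec\gamma))$, and Weyl's criterion (Lemma~\ref{lem:Weyl}) for equidistribution of $(n\vec\alpha)$.

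The organization differs slightly. You route all three equivalences through the character condition $(\star)$ and prove ergodicity $\Leftrightarrow(\star)$ directly by the $L^2$ eigenfunction argument, whereas the paper takes ``orbit of $0$ is dense'' as the hub and appeals to \cite[Theorem~4.14]{EinsWard2011} for the equivalence of dense orbits with (unique) ergodicity for rotations on compact metrizable groups. Thus your steps (a) and (b) are more self-contained, while the paper outsources them to a citation. In step (c) the paper simply asserts that $\theta=S(\vec\gamma)$ is irrational under the $\Q$-independence hypothesis and calls the converse direction ``easy'' without details; you supply both arguments explicitly (rationality of the $p$-adic fractional parts, clearing denominators to land in $\Z^d$), which is exactly what a full proof needs. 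Your alternative measure-theoretic route in (b) is also valid and gives an even shorter path to unique ergodicity.
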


\begin{proof} First of all, it is clear that the linear independence over $\Q$ of the real numbers in the statement of the lemma does not depend on the choice of representative for $\vec{\alpha}$ in $X_\mc{Q}^d.$
For rotations on compact metrizable groups, the existence of a dense orbit is equivalent to unique ergodicity of the rotation, with Haar measure as the unique ergodic measure (see \cite[Theorem 4.14]{EinsWard2011}). In particular, for a fixed rotation, every orbit is dense if and only if the orbit of 0 is dense. Thus, we focus on whether or not $\left\{ n \vec{\alpha}\right\}_{n\in\N}$ is dense in $X_{\cQ}^d$.

One direction is easy and does not require Weyl's criterion. If 
$1, \alpha_{\infty,1}$, $\alpha_{\infty,2}, \ldots, \alpha_{\infty,d}$ are linearly dependent over $\Q$, then $\left\{ n \vec{\alpha}\right\}_{n\in\N}$ is not dense
in $X_{\cQ}^d$, which implies that $T_{\vec{\alpha}}$ is not uniquely ergodic.

For the converse, suppose that $1, \alpha_{\infty,1}, \alpha_{\infty,2}, \ldots, \alpha_{\infty,d}$ are linearly independent over $\Q$. Let $\vec{\gamma} = (\gamma_1, \ldots, \gamma_d) $ be a non-zero element of $\Gamma_{\cQ}^d$. Then
\begin{equation*}
\theta = \sum_{j=1}^d \left(\gamma_j\alpha_{\infty,j} - \sum_{p \in \cQ}\left\{\gamma_j\alpha_{p,j}\right\}_{p}\right)
\end{equation*}
is an irrational number. (Recall that the inner sum in the definition of $\theta$ has only finitely many nonzero terms.) For any prime $p$, any integer $j$ such that $1 \leq j \leq d$, and any $n \in \Z$,
\begin{equation*}
 n \left\{\gamma_j \alpha_{p,j}\right\}_p -\left\{n \gamma_j \alpha_{p,j} \right\}_p\in \Z.
\end{equation*}
Thus,
\begin{equation*}
\frac{1}{N} \sum_{n=1}^N \psi_{\vec{\gamma}}^{\cQ}(n \vec{\alpha}) = \frac{1}{N} \sum_{n=1}^N e(n\theta),
\end{equation*}
 which tends to $0$ as $N \rightarrow \infty$ by the irrationality of $\theta$. By Lemma \ref{lem:Weyl}, this implies that $\left\{ n \vec{\alpha}\right\}_{n\in\N}$ is dense in $X_{\cQ}^d$, so $T_{\vec{\alpha}}$ is uniquely ergodic.
\end{proof}

\section{Restriction of possible volumes of BRS's}\label{sec:restriction}

In this section we prove that, if $\vec{\alpha}$ satisfies the hypothesis of Theorem \ref{thm:main}, then the volume of any BRS for $T_{\vec{\alpha}}$ must be an element of the set in \eqref{eq:volumes}. The argument for $X_{\cQ}^d$ 
is analogous to the argument for $X_{\cQ}$ in \cite{FurnHaynKoiv2019}. Moreover, similar arguments appear in many places, such as \cite{FursKeynShap1973}, \cite[Theorem 2]{Hala1976}, \cite[Section 2.2]{GrepLev2015}, and \cite[Chapter 14]{GottHedl1955}, so the underlying ideas in this section are not new.  We include these arguments only for completeness, since the Fourier analysis in the proof is taking place in a slightly different setting than usual. The following lemma relates the existence of BRS's to the existence of dynamical coboundaries for the map $T_{\vec{\alpha}}$.

\begin{lemma}\label{lem:CoBound}
  A measurable set $A\subseteq X_\cQ^d$ is a BRS for $T_{\vec{\alpha}}$ if and only if there exists a bounded, measurable function $g:X_\cQ^d\rar\R$ satisfying
  \begin{equation}\label{eq:CoBoundEqn}
    \chi_A(\vec{x})-|A|=g(\vec{x})-g(\vec{x}+\vec{\alpha}),
  \end{equation}
  for all $\vec{x}\in X_\cQ^d$.
\end{lemma}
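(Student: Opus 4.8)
The plan is to prove the two implications of this standard "BRS $\Leftrightarrow$ coboundary" equivalence separately, adapting the classical argument (as in \cite{GottHedl1955}, \cite{FursKeynShap1973}) to the adelic setting.

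\medskip

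\textbf{The easy direction ($\Leftarrow$).} Suppose such a bounded measurable $g$ exists. Summing \eqref{eq:CoBoundEqn} over $n = 0, 1, \ldots, N-1$ along the orbit of $\vec{x}$, the right-hand side telescopes:
\[
\sum_{n=0}^{N-1}\left(\chi_A(\vec{x}+n\vec{\alpha}) - |A|\right) = g(\vec{x}) - g(\vec{x}+N\vec{\alpha}).
\]
Hence $\left|\sum_{n=0}^{N-1}\chi_A(\vec{x}+n\vec{\alpha}) - N|A|\right| \le 2\sup|g| =: C$ for \emph{every} $\vec{x}$ (not just almost every), so $A$ is a BRS.

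\medskip

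\textbf{The hard direction ($\Rightarrow$).} This is where the work is. Suppose $A$ is a BRS with constant $C$, so the Birkhoff sums $S_N(\vec{x}) := \sum_{n=0}^{N-1}\left(\chi_A(\vec{x}+n\vec{\alpha}) - |A|\right)$ satisfy $|S_N(\vec{x})| \le C$ for a.e.\ $\vec{x}$ and all $N$. Observe the cocycle identity $S_{N}(\vec{x}) = S_1(\vec{x}) + S_{N-1}(\vec{x}+\vec{\alpha})$, i.e.\ $\chi_A(\vec{x}) - |A| = S_N(\vec{x}) - S_{N-1}(\vec{x}+\vec{\alpha})$; so if $S_N$ had a pointwise a.e.\ limit $g$ we would be done, but in general $S_N$ only oscillates boundedly. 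The standard fix is to extract a limit in a weak sense: the sequence $\{S_N\}$ is bounded in $L^\infty(X_\cQ^d) \subseteq L^2(X_\cQ^d)$, so by weak-$*$ (or weak-$L^2$) compactness some subsequence $S_{N_k}$ converges weakly to a function $g \in L^\infty$ with $\|g\|_\infty \le C$. One then checks that this $g$ satisfies \eqref{eq:CoBoundEqn} a.e.: from $\chi_A(\vec{x}) - |A| = S_{N_k}(\vec{x}) - S_{N_k}(\vec{x}+\vec{\alpha}) + \big(S_{N_k}(\vec{x}+\vec{\alpha}) - S_{N_k - 1}(\vec{x}+\vec{\alpha})\big)$ and noting the last parenthesized term is $\chi_A(\vec{x}+N_k\vec{\alpha}) - |A|$, which is bounded, one passes to the weak limit — using that the Koopman operator $f \mapsto f\circ T_{\vec{\alpha}}$ is unitary on $L^2$ hence weakly continuous, and that the error term, being bounded and a coboundary-difference along a further averaging, washes out. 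A cleaner route, which I would actually carry out, is to work with Cesàro averages $g_M := \frac{1}{M}\sum_{N=1}^{M} S_N$: these are still bounded by $C$, a short computation gives $g_M(\vec{x}) - g_M(\vec{x}+\vec{\alpha}) = \chi_A(\vec{x}) - |A| - \frac{1}{M}S_M(\vec{x}+\vec{\alpha})$ where the last term is $O(C/M) \to 0$, and then a weak-$*$ limit point $g$ of $\{g_M\}$ in $L^\infty$ satisfies \eqref{eq:CoBoundEqn} exactly in $L^2$, hence a.e. Finally, since all translates of a uniquely ergodic rotation's BRS are again BRS's (as remarked before Lemma \ref{lem:Weyl}) and $g$ is only defined a.e., one can, if a genuinely everywhere-defined $g$ is wanted, note that modifying $g$ on a null set does not affect \eqref{eq:CoBoundEqn} holding a.e.; the statement as written asks for it to hold for all $\vec{x}$, so I would either (i) observe that $g(\vec{x}) := \limsup_M g_M(\vec{x})$ is an everywhere-defined bounded measurable representative for which the functional equation, being an a.e.\ identity between two measurable functions that are each modified only on null sets, can be promoted using ergodicity/continuity of $T_{\vec\alpha}$, or (ii) simply note the equivalence is used only up to null sets in Section \ref{sec:restriction}.

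\medskip

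\textbf{Main obstacle.} The only real subtlety is the $(\Rightarrow)$ direction: producing the coboundary function from mere \emph{boundedness} (not convergence) of Birkhoff sums. The weak-compactness argument is soft and does not need anything special about $X_\cQ^d$ beyond its being a compact abelian group with a probability Haar measure and $T_{\vec\alpha}$ being a measure-preserving (here Haar-preserving) transformation, so the adelic setting introduces no new difficulty — one just has to be careful that $L^2(X_\cQ^d, \text{Haar})$ is the right Hilbert space and that the Koopman operator is unitary there, both of which are immediate. I would flag explicitly that we do not even use unique ergodicity here, only measure preservation, so the lemma holds for all $\vec\alpha$.
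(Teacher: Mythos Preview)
Your easy direction is identical to the paper's. For the hard direction, however, the paper takes a much more elementary route than your weak-compactness argument: it simply sets
\[
g(\vec{x}) = \liminf_{N\to\infty} S_N(\vec{x}),
\]
which is bounded (by the BRS constant, a.e.) and measurable. The key observation is the identity $S_{N+1}(\vec{x}) = \big(\chi_A(\vec{x}) - |A|\big) + S_N(\vec{x}+\vec{\alpha})$; since the first term on the right is independent of $N$, taking $\liminf_N$ of both sides gives \eqref{eq:CoBoundEqn} \emph{pointwise} immediately, with no compactness, no subsequences, and no Ces\`aro averaging. This sidesteps entirely the a.e.-versus-everywhere issue you flagged (modulo the null set on which the BRS bound may fail, which is harmless for the application in Section~\ref{sec:restriction}). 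Your approach is correct and is the ``soft'' functional-analytic argument one would reach for in more general cohomological settings, but here the $\liminf$ trick is both shorter and sharper: it exploits the one-sided structure of the cocycle identity (constant plus shifted sequence) under which $\liminf$ distributes, whereas your $\limsup_M g_M$ suggestion in option (i) would not work directly since $\limsup$ does not distribute over the \emph{difference} $g_M(\vec{x}) - g_M(\vec{x}+\vec{\alpha})$.
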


\begin{proof}
Suppose that $A$ is a measurable subset of $X_{\cQ}^d$ and that there exists a bounded, measurable function $g:X_\cQ^d\rar\R$ satisfying \eqref{eq:CoBoundEqn} for all $\vec{x}\in X_\cQ^d$.
Then we have that
\begin{equation*}
  \left|\sum_{n=0}^{N-1}\chi_A(\vec{x}+n\vec{\alpha})-N|A|\right|=\left|g(\vec{x})-g(\vec{x}+N\vec{\alpha})\right|\le 2 \|g\|_\infty,
\end{equation*}
where $\|g\|_\infty = \sup_{\vec{x} \in X_{\cQ}^d} |g(\vec{x})|$. Hence, $A$ is a BRS for $T_{\vec{\alpha}}$.

Conversely, suppose that $A\subseteq X_{\cQ}^d$ is measurable and a BRS for $T_{\vec{\alpha}}$. Then the function $g:X_\cQ^d\rar\R$ defined by
\begin{equation*}
g(\vec{x})=\liminf_{N\rar\infty}\left(\sum_{n=0}^{N-1}\chi_A(\vec{x}+n\vec{\alpha})-N|A|\right)
\end{equation*}
is well-defined, bounded, and measurable (by Fatou's lemma). By rearranging sums, we can write
\begin{align*}
 &\chi_A(\vec{x})-|A|+\sum_{n=0}^{N-1}\left(\chi_A(\vec{x}+(n+1)\vec{\alpha})-|A|\right)\\
 &\hspace*{1in}=\sum_{n=0}^N\left(\chi_A(\vec{x}+n\vec{\alpha})-|A|\right).
\end{align*}
Taking the limit inferior of both sides, we see that \eqref{eq:CoBoundEqn} holds.
\end{proof}
Next we use Lemma \ref{lem:CoBound} together with a Fourier analysis argument to prove the following result.
\begin{prop} \label{prop:volumes}
Suppose that $\vec{\alpha} \in X_{\cQ}^d$ and that the numbers
\[1, \alpha_{\infty,1}, \alpha_{\infty,2}, \ldots, \alpha_{\infty,d}\] are linearly independent over $\Q$. If a measurable set $A \subseteq X_{\cQ}^d$ is
a BRS for $T_{\vec{\alpha}}$, then there exist $\vec{\gamma} \in \Gamma_{\cQ}^d$ and $\eta \in \Z$ such that
\begin{equation}
	|A| = \sum_{j=1}^d \left(\gamma_j\alpha_{\infty,j} - \sum_{p \in \cQ}\left\{\gamma_j\alpha_{p,j}\right\}_{p}\right) + \eta.
\label{eq:volume}
\end{equation}
\end{prop}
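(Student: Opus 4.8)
The plan is to apply Lemma~\ref{lem:CoBound} to get a bounded measurable coboundary equation $\chi_A(\vec{x}) - |A| = g(\vec{x}) - g(\vec{x}+\vec{\alpha})$, and then expand both sides in Fourier series on the compact abelian group $X_{\cQ}^d$, whose dual is explicitly parametrized by $\vec{\gamma} \in \Gamma_{\cQ}^d$ via the characters $\psi_{\vec{\gamma}}^{\cQ}$ described in Section~\ref{sec:background}. Since $g \in L^\infty \subseteq L^2$, it has Fourier coefficients $\widehat{g}(\vec{\gamma})$, and the coboundary equation forces, for every $\vec{\gamma} \in \Gamma_{\cQ}^d$,
\begin{equation*}
\widehat{\chi_A}(\vec{\gamma}) - |A|\,[\vec{\gamma}=0] = \widehat{g}(\vec{\gamma})\left(1 - \psi_{\vec{\gamma}}^{\cQ}(\vec{\alpha})\right).
\end{equation*}
The $\vec{\gamma}=0$ term is vacuous ($0=0$). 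For $\vec{\gamma}\neq 0$, the key point is that $\psi_{\vec{\gamma}}^{\cQ}(\vec{\alpha}) = e(\theta_{\vec{\gamma}})$ where $\theta_{\vec{\gamma}} = \sum_j(\gamma_j\alpha_{\infty,j} - \sum_p\{\gamma_j\alpha_{p,j}\}_p)$, which is \emph{irrational} by the linear independence hypothesis (exactly as computed in the proof of Lemma~\ref{lem:UErg}); hence $1 - \psi_{\vec{\gamma}}^{\cQ}(\vec{\alpha}) \neq 0$ for all nonzero $\vec{\gamma}$, and we may solve $\widehat{g}(\vec{\gamma}) = \widehat{\chi_A}(\vec{\gamma})/(1-\psi_{\vec{\gamma}}^{\cQ}(\vec{\alpha}))$.

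The heart of the argument is then the following dichotomy. Consider $\widehat{\chi_A}$ as a function on $\Gamma_{\cQ}^d$. I would show that $A$ being a BRS forces $\widehat{\chi_A}(\vec{\gamma}) = 0$ for all but finitely many $\vec{\gamma}$, so that $\chi_A$ agrees almost everywhere with a trigonometric polynomial; then $|A| = \widehat{\chi_A}(0)$ must be of the stated form. To make the finiteness work, I would argue: if $\widehat{\chi_A}(\vec{\gamma})\neq 0$ for infinitely many $\vec{\gamma}$, then because $g \in L^2$ we need $\sum_{\vec{\gamma}\neq 0}|\widehat{\chi_A}(\vec{\gamma})|^2 / |1 - \psi_{\vec{\gamma}}^{\cQ}(\vec{\alpha})|^2 < \infty$, which forces $|1 - e(\theta_{\vec{\gamma}})|$ to be bounded below along the support — but one then shows (using that $\vec{x}\mapsto \chi_A(\vec x)$ is a genuine indicator, i.e. takes values in $\{0,1\}$, equivalently $\chi_A^2 = \chi_A$, which translates into a convolution identity $\widehat{\chi_A} * \widehat{\chi_A} = \widehat{\chi_A}$ on $\Gamma_{\cQ}^d$) that an $L^\infty$ coboundary with a nontrivial indicator on the right side cannot have infinitely many nonzero coefficients. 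Alternatively, and more cleanly, I would use the Furstenberg–Keynes–Shapiro/Halász-type argument referenced in the text: a bounded measurable $g$ solving the coboundary equation, together with unique ergodicity of $T_{\vec\alpha}$ (Lemma~\ref{lem:UErg}), implies $g$ may be taken to be a finite linear combination of characters; substituting back, $\chi_A - |A|$ is a trigonometric polynomial, and evaluating the constant term gives $|A| \in \{\theta_{\vec\gamma} + \eta : \vec\gamma \in \Gamma_{\cQ}^d, \eta\in\Z\}$, where $\eta$ absorbs the integer ambiguity coming from the fact that a strict fundamental domain was chosen (the $p$-adic fractional parts and the choice of representative in $[0,1)^d$ contribute integers).

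I would structure the write-up as: (i) invoke Lemma~\ref{lem:CoBound}; (ii) set up Fourier series on $X_{\cQ}^d$ and derive the coefficient relation; (iii) observe irrationality of $\theta_{\vec\gamma}$ for $\vec\gamma\neq 0$, quoting the computation in Lemma~\ref{lem:UErg}; (iv) run the $L^2$/boundedness argument to conclude $\widehat{\chi_A}$ is finitely supported; (v) read off $|A| = \widehat{\chi_A}(0)$, expand $\widehat{\chi_A}(0) = \int_{F_{\cQ}^d}\chi_A$ and match it to $\theta_{\vec\gamma} + \eta$ by grouping the real contribution $\sum_j\gamma_j\alpha_{\infty,j}$, the $p$-adic contributions $-\sum_{p}\{\gamma_j\alpha_{p,j}\}_p$, and an integer correction $\eta$. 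The main obstacle I anticipate is step (iv): carefully justifying that the coboundary equation with a \emph{bounded} (not merely $L^2$) function $g$ kills all but finitely many Fourier coefficients of $\chi_A$. The honest route is to reproduce the Gottschalk–Hedlund / Halász argument — boundedness of the Birkhoff sums plus unique ergodicity implies the cocycle is a coboundary \emph{with continuous (indeed trigonometric-polynomial) transfer function} — rather than trying to extract finiteness from $L^2$ summability alone, since the small-divisor denominators $|1-e(\theta_{\vec\gamma})|$ need not be bounded below a priori. Given that the excerpt explicitly says these arguments are standard and included only for completeness, I would present (iv) by citing \cite{Hala1976} and \cite{GottHedl1955} for the key coboundary-regularity input and then doing the elementary Fourier bookkeeping in our adelic coordinates.
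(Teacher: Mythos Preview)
Your plan has a genuine gap at steps (iv) and (v). The claim that a BRS must have $\widehat{\chi_A}$ finitely supported is simply false: already on $\R/\Z$ with $\cQ=\emptyset$, the interval $A=[0,\alpha)$ is a BRS for rotation by $\alpha$, yet $\widehat{\chi_A}(n)=(e(-n\alpha)-1)/(-2\pi i n)\neq 0$ for every $n\neq 0$. So $\chi_A$ is not a trigonometric polynomial, and neither Gottschalk--Hedlund nor Hal\'asz asserts that the transfer function $g$ is one. Consequently your step (v) collapses: even granting (iv), the identity $|A|=\widehat{\chi_A}(0)$ is tautological and carries no information linking $|A|$ to $\theta_{\vec{\gamma}}$.

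The idea you are missing is the exponential trick that the paper (following Hal\'asz) uses. After Lemma~\ref{lem:CoBound} gives a bounded measurable $g$ with $\chi_A-|A|=g-g\circ T_{\vec{\alpha}}$, set $h(\vec{x})=e(g(\vec{x}))$. Since $\chi_A$ is integer-valued, the coboundary equation exponentiates to
\[
h(\vec{x}+\vec{\alpha})=e(|A|)\,h(\vec{x}),
\]
i.e.\ $h$ is an $L^2$ eigenfunction of $T_{\vec{\alpha}}$ with eigenvalue $e(|A|)$. Because $|h|\equiv 1$, Plancherel guarantees some $\widehat{h}(\vec{\gamma})\neq 0$; comparing Fourier coefficients of $h\circ T_{\vec{\alpha}}=e(|A|)h$ then forces $e(|A|)=\psi_{\vec{\gamma}}^{\cQ}(\vec{\alpha})$ for that $\vec{\gamma}$, which unwinds to \eqref{eq:volume}. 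No finiteness of support, no small-divisor control, and no regularity of $g$ beyond boundedness is needed. Your Fourier setup for $g$ and $\chi_A$ directly is the wrong object to expand; expand $e(g)$ instead.
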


\begin{proof}
Suppose that $A\subseteq X_{\cQ}^d$ is a measurable set that is a BRS for $T_{\vec{\alpha}}$. By Lemma \ref{lem:CoBound}, there exists a bounded, measurable function $g:X_\cQ^d\rar\R$ satisfying equation (\ref{eq:CoBoundEqn}) for all $\vec{x}\in X_\cQ^d$. Define
\begin{eqnarray*}
	h: X_{\cQ}^d &\rightarrow& \C \\
	\vec{x} &\mapsto & e(g(\vec{x})).
\end{eqnarray*}
It follows that
\begin{equation*}
h(\vec{x} + \vec{\alpha}) = e(g(\vec{x}) - \chi_A(\vec{x}) + |A|)
 = e(|A|)h(\vec{x}).
\end{equation*}
Let $\widehat{h}(\vec{\gamma}),~\vec{\gamma} \in \Gamma_{\cQ}^d,$ denote the Fourier coefficients of $h$. Then
\begin{align*}
	0 & =e(|A|)h(\vec{x}) - h(\vec{x} + \vec{\alpha})\\
	& = \sum_{\vec{\gamma} \in \Gamma_{\cQ}^d} \left(e(|A|)-\psi_{\vec{\gamma}}^{\cQ}(\vec{\alpha})\right)\widehat{h}(\vec{\gamma})\psi_{\vec{\gamma}}^{\cQ}(\vec{x}).
\end{align*}
Since $h(\vec{x})$ is not identically zero, Plancherel's theorem implies that some coefficient is zero, so 
$e(|A|) = \psi_{\vec{\gamma}}^{\cQ}(\vec{\alpha})$ for some $\vec{\gamma} \in \Gamma_{\cQ}$. By the definition of $\psi_{\vec{\gamma}}^{\cQ}$, there exists an $\eta \in \Z$ such that (\ref{eq:volume}) holds.
\end{proof}

\section{Construction of BRS's of all allowable volumes}\label{sec:construction}

 Fix $\vec{\alpha}\in X_{\cQ}^d$, and suppose that $1, \alpha_{\infty,1},\alpha_{\infty,2},\ldots,\alpha_{\infty,d} \in \bR$ are linearly independent over $\bQ$. Define $T_{\vec{\alpha}}: X_{\cQ}^d \rightarrow X_{\cQ}^d$ by $T_{\vec{\alpha}}(\vec{x}) = \vec{x} + \vec{\alpha}$. Suppose $\gamma_1, \gamma_2,\ldots, \gamma_d \in \Gamma_{\cQ} \backslash \left\{0\right\}$ and $\eta \in \bZ$ are chosen such that 
\begin{equation}\label{eqn:VolFormula}
	V = \sum_{j=1}^d \left(\gamma_j\alpha_{\infty,j} - \sum_{p \in \cQ}\left\{\gamma_j\alpha_{p,j}\right\}_{p}\right) + \eta >0.
\end{equation} 
We will explain how to deal with the case when some of the $\gamma_i$ are 0 at the end of the proof. For $1\leq j \leq d$, let $ \delta_j$ be the denominator of $\gamma_j$, that is, the product of all $|\gamma_j|_p$ such that $|\gamma_j|_p>1$. Then there exist $g_j \in \bZ$ such that $\gamma_j = \dfrac{g_j}{\delta_j}$. Let $D$ be the $d\times d$ diagonal matrix with $\delta_1, \delta_2, \ldots, \delta_d$ on the diagonal. Since $\delta_1, \delta_2, \ldots, \delta_d \in \Gamma_{\cQ} \backslash \left\{0\right\}$, the matrix 
$D$ is invertible. Moreover, the diagonal entries of $D^{-1}$ are also in $\Gamma_{\cQ} \backslash \left\{0\right\}$. As with elements of $\Gamma_{\cQ}$, we will abuse notation and consider $D$ and $D^{-1}$ as acting on $\bR^d$, $\bQ_p^d$, $\bA_{\cQ}^d$, and $X_{\cQ}^d$, interpreted appropriately in each case.
Let
\begin{equation*}
\vec{\beta} = \left( \frac{\alpha_{\infty,1}}{\delta_1}  - \sum_{p \in \cQ}\left\{\frac{\alpha_{p,1}}{\delta_1}\right\}_{p}, \ldots, \frac{\alpha_{\infty,d}}{\delta_d}  - \sum_{p \in \cQ}\left\{\frac{\alpha_{p,d}}{\delta_d}\right\}_{p} \right)\in\R^d.
\end{equation*}
Since $g_1, g_2,\ldots,g_d$ are integers, the number
\begin{equation*}
\eta' =\sum_{j=1}^{d}\sum_{p \in \cQ}\left( g_j\left\{\frac{\alpha_{p,j}}{\delta_j} \right\}_p - \left\{\frac{g_j\alpha_{p,j}}{\delta_j} \right\}_p\right)  + \eta
\end{equation*}
is also an integer. Thus, $g_1,g_2,\ldots,g_d$, and $\eta'$ are
integers satisfying the equation
\begin{equation*}
	V = \sum_{j=1}^d g_j \left( \frac{\alpha_{\infty,j}}{\delta_j}  - \sum_{p \in \cQ}\left\{\frac{\alpha_{p,j}}{\delta_j}\right\}_{p}\right) + \eta'.
\end{equation*}
Take $P_B$ to be a parallelotope in $\bR^d$ with volume $V$, spanned by vectors $\vec{v}_1, \vec{v}_2,\ldots, \vec{v}_d \in \bZ \vec{\beta} + \bZ^d$. This is possible because of \cite[Corollary 2]{GrepLev2015}, and it also follows from \cite[Theorem 1]{GrepLev2015} that $P_B$ is a BRS for the toral rotation $S_{\vec{\beta}}: \bR^d/ \bZ^d \rightarrow \bR^d/\bZ^d$ defined by 
\[S_{\vec{\beta}}(\vec{x}) = \vec{x} + \vec{\beta}.\]
Let $P_A$ be the parallelotope of volume $\delta_1\delta_2\cdots\delta_d V$ in $\R^d$ spanned by the vectors $D \vec{v}_1, D\vec{v}_2,\ldots,D\vec{v}_d$, and define sets $A$ and $B$ in $X_{\cQ}^d$ by
\begin{align*}
	A &= P_A \times  \prod_{p \in \cQ} (\delta_1 \zp \times \delta_2 \zp\times \cdots \times \delta_d \zp) \quad\text{ and }\\
	B &= P_B \times  \prod_{p \in \cQ} \bZ_p^d.
\end{align*}
For each $1 \leq j \leq d$, the set $\delta_j \zp$ is equal to $\zp$ for all but finitely many $p \in \cQ$ because $|\delta_j|_p = 1$ for all but finitely many $p \in \cQ$.
The adelic polytopes $A$ and $B$ are measurable and have measure $V$ (as multisets) with respect to normalized Haar measure on $X_{\cQ}^d$. The remainder of the proof will be devoted to proving that $A$ is a BRS for $T_{\vec{\alpha}}$.

\begin{lemma}  \label{lemma:rescale}
For all $n \in \bN$, $\chi_A(T^n_{\vec{\alpha}} (0)) = \chi_B(T^n_{D^{-1}\vec{\alpha}}(0))$.
\end{lemma}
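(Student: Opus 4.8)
The plan is to unwind both sides of the claimed identity using the explicit fundamental domains and the definitions of $A$, $B$, $D$, and $\vec\beta$, and to show that they agree coordinate by coordinate. Fix $n\in\N$. On the left, $T^n_{\vec\alpha}(0)=n\vec\alpha$, reduced into $F_\cQ^d=[0,1)^d\times\prod_{p\in\cQ}\Z_p^d$; on the right, $T^n_{D^{-1}\vec\alpha}(0)=nD^{-1}\vec\alpha$, reduced into the same fundamental domain. Since $A=P_A\times\prod_{p\in\cQ}(\delta_1\Z_p\times\cdots\times\delta_d\Z_p)$ and $P_A=DP_B$, the key observation is that the linear map $D$ (acting diagonally with entries $\delta_j\in\Gamma_\cQ$) carries the $B$-picture to the $A$-picture: membership of $n D^{-1}\vec\alpha$ in $B$ should be equivalent to membership of $n\vec\alpha$ in $A$, because applying $D$ sends $P_B\mapsto P_A$ in the Archimedean coordinate and $\Z_p^d\mapsto\delta_1\Z_p\times\cdots\times\delta_d\Z_p$ in each $p$-adic coordinate.

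The main subtlety, and the step I expect to be the real obstacle, is that $D$ does not commute with reduction modulo $\Gamma_\cQ^d$: reducing $n\vec\alpha$ into $F_\cQ^d$ and then applying $D$ is not literally the same as applying $D$ to $nD^{-1}\vec\alpha$ and then reducing. One must check that the discrepancy lies in $\Gamma_\cQ^d$ and, more importantly, that it does not affect the indicator functions. Concretely, I would write $nD^{-1}\vec\alpha = \vec{b} + \vec\gamma$ with $\vec b\in F_\cQ^d$ (so $\chi_B(T^n_{D^{-1}\vec\alpha}(0))=\chi_B(\vec b)$) and $\vec\gamma\in\Gamma_\cQ^d$, apply $D$ to get $n\vec\alpha = D\vec b + D\vec\gamma$, and then reduce $D\vec b$ into $F_\cQ^d$; the point is that the reduction of $D\vec b$ differs from $D\vec b$ by an element of $\Z^d\times\{0\}\subseteq\Gamma_\cQ^d$ in the real coordinate only (since $D$ acts by integer-or-unit scaling in each $p$-adic place, $D\vec b$ already has $p$-adic part in $\Z_p^d$ for the relevant structure), and that $A$ is invariant under exactly those translations needed to absorb it. I would verify: (i) $\vec b\in P_B\times\prod\Z_p^d$ iff $D\vec b\in DP_B\times\prod(\delta_1\Z_p\times\cdots\times\delta_d\Z_p)=A$ up to the $\Gamma_\cQ^d$-reduction, using that $D^{-1}$ has entries in $\Gamma_\cQ$ so $D$ is a bijection of the relevant lattices; (ii) the $\Gamma_\cQ^d$-translation $D\vec\gamma$ is harmless since both $A$ and $B$ are (by construction) unions of $\Gamma_\cQ^d$-cosets' representatives, i.e. well-defined subsets of $X_\cQ^d$.

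A cleaner way to organize this, which I would ultimately prefer, is to avoid reductions altogether and work directly in $X_\cQ^d$: the map $D^{-1}:X_\cQ^d\to X_\cQ^d$ is a well-defined group automorphism (its matrix and inverse both have entries in $\Gamma_\cQ$, hence it preserves $\Gamma_\cQ^d$), it intertwines the rotations via $D^{-1}\circ T_{\vec\alpha} = T_{D^{-1}\vec\alpha}\circ D^{-1}$, and it satisfies $D^{-1}(A)=B$ as subsets of $X_\cQ^d$. Granting these three facts, the computation is immediate:
\[
\chi_A(T^n_{\vec\alpha}(0)) = \chi_{D^{-1}(A)}(D^{-1}T^n_{\vec\alpha}(0)) = \chi_B(T^n_{D^{-1}\vec\alpha}(D^{-1}(0))) = \chi_B(T^n_{D^{-1}\vec\alpha}(0)),
\]
using $D^{-1}(0)=0$. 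So the proof reduces to the three bookkeeping claims: $D^{-1}$ is a well-defined automorphism of $X_\cQ^d$, it conjugates the two rotations, and $D^{-1}(A)=B$. The first two are formal from $\delta_j,\delta_j^{-1}\in\Gamma_\cQ$; the third is the one requiring genuine (but routine) verification, namely that $D^{-1}P_A=P_B$ in $\R^d$ by construction and that $D^{-1}(\delta_1\Z_p\times\cdots\times\delta_d\Z_p)=\Z_p^d$ for every $p\in\cQ$, which holds because multiplication by $\delta_j^{-1}$ maps $\delta_j\Z_p$ onto $\Z_p$. The main obstacle is thus entirely a matter of correctly setting up $D^{-1}$ as acting on $X_\cQ^d$ and tracking that the $p$-adic components of $A$ were designed precisely so that $D^{-1}$ pulls them back to $\Z_p^d$; once that is in place the lemma is a one-line intertwining argument.
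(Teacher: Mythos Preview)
Your proposal is correct, and your preferred ``cleaner'' argument is essentially the paper's own proof repackaged in the language of automorphisms: the paper shows directly that $\vec q\mapsto D^{-1}\vec q$ is a bijection between the sets $\{\vec q\in\Gamma_\cQ^d:n\vec\alpha+\vec q\in A\}$ and $\{\vec q'\in\Gamma_\cQ^d:nD^{-1}\vec\alpha+\vec q'\in B\}$, which is exactly your three bookkeeping claims ($D^{-1}$ preserves $\Gamma_\cQ^d$, intertwines the rotations, and sends $A$ to $B$ in $\A_\cQ^d$) unwound. Your first approach via reduction into $F_\cQ^d$ is an unnecessary detour; working with lattice translates in $\A_\cQ^d$, as both you (ultimately) and the paper do, avoids those complications entirely.
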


\begin{proof}
Let $k, n \in \bN$. First note that 
\[T^n_{\vec{\alpha}}(0) = n\vec{\alpha}\quad\text{ and }\quad T^n_{D^{-1}\vec{\alpha}}(0) = nD^{-1}\vec{\alpha}.\]
If $\chi_A(T^n_{\vec{\alpha}} (0)) = k$, then there exist exactly $k$ vectors $\vec{q}_1, \ldots, \vec{q}_k \in \Gamma_{\cQ}^d$ such that $n\vec{\alpha} + \vec{q}_i \in A$ for $1 \leq i \leq k$. Moreover, $n\vec{\alpha} + \vec{q}_i \in A$ if and only if 
\[n\vec{\alpha}_{\infty}+ \vec{q}_i \in P_A\quad\text{ and }\quad n \alpha_{p,j}  + q_{i,j} \in \delta_j  \zp,\] for all $p \in \cQ$ and $1 \leq j \leq d$. We have $n\vec{\alpha}_{\infty}+ \vec{q}_i \in P_A$ if and only if there exist real numbers $t_1, t_2,\ldots,t_d \in [0,1)$ such that 
\[n\vec{\alpha}_{\infty}+ \vec{q}_i = t_1 D \vec{v}_1 + t_2 D \vec{v}_2 + \cdots + t_d D \vec{v}_d,\] which holds if and only if 
\[D^{-1}(n\vec{\alpha}_{\infty}+ \vec{q}_i) = t_1\vec{v}_1 + t_2\vec{v}_2 + \cdots + t_d \vec{v}_d \in P_B.\] Moreover, 
$n \alpha_{p,j}  + q_{i,j} \in \delta_j  \zp$ for all $p \in \cQ$ and $1 \leq j \leq d$ if and only if $(n \alpha_{p,j}  + q_{i,j})/\delta_j \in \zp$ for all $p \in \cQ$ and $1 \leq j \leq d$. Thus, the vectors in $\Gamma_{\cQ}^d$ such that $nD^{-1} \vec{\alpha} + D^{-1}\vec{q}_j \in B$ are precisely $D^{-1}\vec{q}_1, \ldots, D^{-1}\vec{q}_k$, and it follows that $\chi_B(T^n_{D^{-1}\vec{\alpha}} (0)) = k$.

The proof of the other direction, that $\chi_B(T^n_{D^{-1}\vec{\alpha}} (0)) = k$ implies that  $\chi_A(T^n_{\vec{\alpha}} (0)) = k$, follows from a similar argument.
\end{proof}

\begin{lemma}\label{lemma:getint}
For all $\lambda \in\Gamma_{\cQ}$, we have that 
\[\displaystyle \lambda - \sum_{p \in \cQ} \left\{\lambda \right\}_p \in\Z.\]
\end{lemma}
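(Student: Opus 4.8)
The plan is to show that the rational number $r := \lambda - \sum_{p \in \cQ}\{\lambda\}_p$ is a $q$-adic integer for \emph{every} prime $q$, and then to conclude that $r \in \Z$, since a rational number lying in $\Z_q$ for all primes $q$ must be an integer (write it in lowest terms: its denominator can have no prime factor). First I would record the two elementary properties of the $p$-adic fractional part that are needed, both immediate from the definition in Section~\ref{sec:background}: for any prime $p$, the number $\{\lambda\}_p$ is a rational whose denominator is a power of $p$, and $\lambda - \{\lambda\}_p \in \Z_p$. In particular $\{\lambda\}_p = 0$ precisely when $\lambda \in \Z_p$, and this fails for only finitely many $p \in \cQ$ (those with $|\lambda|_p > 1$), so the sum defining $r$ is finite and $r$ is a well-defined rational number.

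Next I would fix a prime $q$ and split into two cases. If $\lambda \in \Z_q$, then in the finite sum $\sum_{p \in \cQ}\{\lambda\}_p$ every term with $p \ne q$ has denominator a power of a prime other than $q$ and hence lies in $\Z_q$, while the term with $p = q$ (if it occurs) equals $\{\lambda\}_q = 0$; therefore $r \in \Z_q$. If instead $\lambda \notin \Z_q$, then $q$ divides the denominator of $\lambda$, so $q \in \cQ$ (because $\lambda \in \Gamma_{\cQ} = \Z[1/p_1, 1/p_2, \ldots]$) and the term $\{\lambda\}_q$ genuinely appears in the sum; I would then rewrite
\[
r = \bigl(\lambda - \{\lambda\}_q\bigr) - \sum_{p \in \cQ,\ p \ne q}\{\lambda\}_p ,
\]
where the first bracket lies in $\Z_q$ by the second property above and each remaining term again has denominator prime to $q$, so once more $r \in \Z_q$. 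Since this exhausts all primes $q$, the conclusion $r \in \Z$ follows.

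I do not expect a substantial obstacle here. The only point needing care is the bookkeeping of denominators across the different places, namely the observation that subtracting the $q$-adic fractional part in order to clear the denominator at $q$ does not spoil integrality at any other prime; this is exactly why the coprimality of the prime-power denominators of distinct fractional parts is the crucial ingredient.
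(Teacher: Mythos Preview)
Your proof is correct and follows essentially the same approach as the paper: both show that $r=\lambda-\sum_{p\in\cQ}\{\lambda\}_p$ lies in $\Z_q$ for every relevant prime $q$ by splitting off the $q$-adic fractional part and using that the remaining terms have denominators prime to $q$ (the paper phrases this via the ultrametric inequality $|r|_q\le\max\{|\lambda-\{\lambda\}_q|_q,\max_{p\ne q}|\{\lambda\}_p|_q\}\le1$, which is the same observation). The only cosmetic difference is that the paper checks just $q\in\cQ$, since for $q\notin\cQ$ the integrality of $r$ at $q$ is automatic from $\lambda\in\Gamma_\cQ$.
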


\begin{proof}
For each $q \in \cQ$ we have that
\[\left|\lambda - \sum_{p \in \cQ} \left\{\lambda \right\}_p \right|_q\le\max\left\{|\lambda - \left\{\lambda \right\}_q|_q, \max_{p\in\cQ\setminus\{q\}}\left\{|\left\{\lambda \right\}_p|_q\right\}\right\}\le 1.\] 
Therefore
\[\lambda -\sum_{p \in \cQ} \left\{\lambda \right\}_p \in \zq\] 
for all $q \in \cQ$ and the conclusion follows.
\end{proof}

Finally we have the following result, which reduces our problem to a problem on $\R^d/\Z^d$.

\begin{lemma} \label{lemma:reducetocircle}
For all $n \in \bN$, $\chi_B(T^n_{D^{-1}\vec{\alpha}}(0)) = \chi_{P_B}(S^n_{\vec{\beta}}(0))$.
\end{lemma}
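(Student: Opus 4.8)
The plan is to reduce the asserted identity to a single bookkeeping fact: the non-archimedean coordinates of $nD^{-1}\vec{\alpha}$ prescribe, at every place $p\in\cQ$, the $p$-adic fractional part of whatever translate $\vec{q}\in\Gamma_{\cQ}^d$ places the point inside the lift of $B$, and ``collecting'' these finitely many fractional parts simultaneously turns $\vec{q}$ into an integer vector $\vec{m}$ and turns the real coordinate of $nD^{-1}\vec{\alpha}$ into the real coordinate of $n\vec{\beta}$. Following the counting in the proof of Lemma \ref{lemma:rescale}, $\chi_B(T^n_{D^{-1}\vec{\alpha}}(0))$ is the number of $\vec{q}\in\Gamma_{\cQ}^d$ with $nD^{-1}\vec{\alpha}+\vec{q}\in P_B\times\prod_{p\in\cQ}\bZ_p^d$, while $\chi_{P_B}(S^n_{\vec{\beta}}(0))$ is the number of $\vec{m}\in\bZ^d$ with $n\vec{\beta}+\vec{m}\in P_B$, so it suffices to biject these two finite sets.

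I would use the map sending $\vec{q}=(q_1,\dots,q_d)$ to $\vec{m}=(m_1,\dots,m_d)$, where $m_j=q_j+n\sum_{p\in\cQ}\{\alpha_{p,j}/\delta_j\}_p$ (a finite sum, since $\alpha_{p,j}/\delta_j\in\bZ_p$ for all but finitely many $p$), with candidate inverse $q_j=m_j-n\sum_{p\in\cQ}\{\alpha_{p,j}/\delta_j\}_p$. Two things must be checked. First, for $\vec{q}\in\Gamma_{\cQ}^d$ the non-archimedean conditions $n\alpha_{p,j}/\delta_j+q_j\in\bZ_p$ (for all $p\in\cQ$ and all $j$) hold if and only if $\vec{m}\in\bZ^d$: writing $\alpha_{p,j}/\delta_j=\lfloor\alpha_{p,j}/\delta_j\rfloor_p+\{\alpha_{p,j}/\delta_j\}_p$ reduces the $p$-adic condition on $q_j$ to $q_j+n\{\alpha_{p,j}/\delta_j\}_p\in\bZ_p$, and since $\{\alpha_{p',j}/\delta_j\}_{p'}\in\bZ[1/p']\subseteq\bZ_p$ for $p'\neq p$ this is in turn equivalent to $m_j\in\bZ_p$; as $m_j\in\Gamma_{\cQ}$ and this holds for every $p\in\cQ$, it is equivalent to $m_j\in\bZ$ --- this is precisely the mechanism behind Lemma \ref{lemma:getint}, and one can alternatively deduce it by applying Lemma \ref{lemma:getint} to $q_j$ together with the elementary identity $n\{x\}_p-\{nx\}_p\in\bZ$ used earlier. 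Second, the archimedean coordinate is unchanged: $n\beta_j+m_j=n\alpha_{\infty,j}/\delta_j-n\sum_{p}\{\alpha_{p,j}/\delta_j\}_p+q_j+n\sum_{p}\{\alpha_{p,j}/\delta_j\}_p=n\alpha_{\infty,j}/\delta_j+q_j$, which is exactly the $j$-th coordinate of the real part of $nD^{-1}\vec{\alpha}+\vec{q}$, so $n\vec{\beta}+\vec{m}\in P_B$ precisely when $nD^{-1}\vec{\alpha}_\infty+\vec{q}\in P_B$.

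Granting these two points, the map $\vec{q}\mapsto\vec{m}$, together with its inverse, restricts to a bijection between $\{\vec{q}\in\Gamma_{\cQ}^d:\ nD^{-1}\vec{\alpha}+\vec{q}\in P_B\times\prod_{p\in\cQ}\bZ_p^d\}$ and $\{\vec{m}\in\bZ^d:\ n\vec{\beta}+\vec{m}\in P_B\}$ (the first verification matches the $p$-adic constraints with the constraint $\vec{m}\in\bZ^d$, the second matches the parallelotope constraints), and equating cardinalities yields $\chi_B(T^n_{D^{-1}\vec{\alpha}}(0))=\chi_{P_B}(S^n_{\vec{\beta}}(0))$. The only genuinely substantive step is the first verification --- showing $m_j$ is an honest element of $\bZ$ and not merely of $\Gamma_{\cQ}$ --- which is where Lemma \ref{lemma:getint} (equivalently the strong triangle inequality) does the work; everything else is the identity in the second verification, which is really the reason the lemma holds.
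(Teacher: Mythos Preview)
Your proof is correct and follows essentially the same route as the paper's: both identify the map $\vec{q}\mapsto\vec{m}$ with $m_j = q_j + n\sum_{p\in\cQ}\{\alpha_{p,j}/\delta_j\}_p$ (the paper arrives at this formula in two steps, via intermediate integers $n_{i,j}=q_{i,j}+\sum_p\{n\alpha_{p,j}/\delta_j\}_p$ and $l_{p,j}=n\{\alpha_{p,j}/\delta_j\}_p-\{n\alpha_{p,j}/\delta_j\}_p$, but the composite is exactly your map), invoke Lemma~\ref{lemma:getint} and the strong triangle inequality to conclude that the $p$-adic membership conditions force $\vec{m}\in\bZ^d$, and then check the archimedean identity $n\vec{\beta}+\vec{m}=nD^{-1}\vec{\alpha}_\infty+\vec{q}$. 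Your explicit bijection framing with ``iff'' verifications handles both directions simultaneously, whereas the paper writes out one direction and declares the other similar.
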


\begin{proof}
Let $k, n \in \bN$. 
If $\chi_B(T^n_{D^{-1}\vec{\alpha}} (0)) = k$, then there exist exactly $k$ vectors $\vec{q}_1, \ldots, \vec{q}_k \in \Gamma_{\cQ}^d$ such that 
\[nD^{-1}\vec{\alpha}  + \vec{q}_i \in B\quad\text{ for }\quad 1 \leq i \leq k.\] Since $n\alpha_{p,j} /\delta_j  + q_{i,j} \in \zp$ for all $p \in \cQ$ and $1 \leq j \leq d$, we have $\left\{q_{i,j} \right\}_p  = -\left\{ n\alpha_{p,j}/\delta_j\right\}_p$ for all $p \in \cQ$ and $1 \leq j \leq d$. By Lemma \ref{lemma:getint}, there exist integers $n_{i,j}$ such that 
\begin{align*}
	q_{i,j} &= n_{i,j} + \sum_{p \in \cQ} \left\{ q_{i,j}\right\}_p \\
		&= n_{i,j} - \sum_{p \in \cQ}\left\{\frac{n\alpha_{p,j}}{\delta_j}  \right\}_p.
\end{align*}
Hence, there are exactly $k$ vectors $\vec{n}_i = (n_{i,1}, n_{i,2}, \ldots, n_{i,d}) \in \bZ^d$ such that the vectors
\begin{align*}
\left(\frac{n\alpha_{\infty,1}}{\delta_1} - \sum_{p \in \cQ}\left\{ \frac{n\alpha_{p,1}}{\delta_1}  \right\}_p  + n_{i,1}, \ldots,
  \frac{n\alpha_{\infty,d}}{\delta_d} - \sum_{p \in \cQ}\left\{ \frac{n\alpha_{p,d}}{\delta_d}  \right\}_p  + n_{i,d}\right)
\end{align*}
are in  the polytope $P_B$. For each $p \in \cQ$ and $1 \leq j \leq d$, let 
\[l_{p,j} =n\left\{\alpha_{p,j}/\delta_j  \right\}_p -  \left\{ n\alpha_{p,j}/\delta_j  \right\}_p,\] and note that $l_{p,j}\in\Z$. Then the vectors
\[\vec{m}_i = \left(n_{i,1} + \sum_{p \in \cQ} l_{p,1}, \ldots, n_{i,d}+\sum_{p \in \cQ} l_{p,d}\right) \in \bZ^d, \] for $1 \leq i \leq k$, are exactly the integer vectors satisfying \[n\vec{\beta} +\vec{m}_i = nD^{-1}\vec{\alpha}_{\infty} + \vec{q}_i  \in P_B.\] Hence, $\chi_{P_B}(S^n_{\vec{\beta}}(0)) = k$.

Again, the proof of the other direction, that $\chi_{P_B}(S^n_{\vec{\beta}}(0)) = k$ implies that  $\chi_B(T^n_{D^{-1}\vec{\alpha}} (0)) = k$, follows from a similar argument.
\end{proof}
In light of Lemmas \ref{lemma:rescale} and \ref{lemma:reducetocircle}, and the fact that $P_B$ is a BRS for $S_{\vec{\beta}}$, it is now clear that the set $A$ satisfies the conclusion of Theorem \ref{thm:main}.

Finally, suppose that $\gamma_1, \gamma_2,\ldots, \gamma_d \in \Gamma_{\cQ} $ and $\eta \in \bZ$ are chosen such that \eqref{eqn:VolFormula} holds, and that exactly $d_0>0$ of the $\gamma_i$ are 0. If $d_0=d$ then we can take our BRS to be $\eta$ copies of $X_\mc{Q}^d$. Otherwise, we can repeat the argument above to construct a BRS of volume $V$ in the $d-d_0$ coordinates for which $\gamma_i\not=0$, and then we can take the Cartesian product of this set with $[0,1)\times \prod_{p\in\mc{Q}}\Z_p$ in the $d_0$ coordinates where $\gamma_i=0$. The resulting set is a BRS of volume $V$ for $T_{\vec{\alpha}}$ satisfying the conclusion of Theorem \ref{thm:main}. This completes the proof of our main result.

\vspace{.15in}

{\footnotesize
	\noindent
	AD, AH: Department of Mathematics, University of Houston,\\
	Houston, TX, United States.\\
	atdas@math.uh.edu, haynes@math.uh.edu\\
	
	\noindent
	JF: Mathematical Sciences, DePaul University,\\
	Chicago, IL, United States.\\
	jfurno@depaul.edu
}

\end{document}